\theoremstyle{plain}
\newtheorem{theorem}{Theorem}[section]
\newtheorem{proposition}[theorem]{Proposition}
\newtheorem{corollary}[theorem]{Corollary}
\newtheorem{lemma}[theorem]{Lemma}
\newtheorem{variant}[theorem]{Variant}
\numberwithin{equation}{section}
\newtheorem{theoremalpha}{Theorem}
\newtheorem{corollaryalpha}[theoremalpha]{Corollary}
\theoremstyle{definition}
\newtheorem{remark}[theorem]{Remark}
\newcommand{\lra}{\longrightarrow}
\newcommand{\noi}{\noindent}
\newcommand{\QQ}{\mathbf{Q}}
\newcommand{\OO}{\mathcal{O}}
\newcommand{\cP}{\mathcal{P}}
\newcommand{\II}{\mathcal{I}}
\newcommand{\eps}{\varepsilon}
\newcommand{\HH}[3]{H^{{#1}} \big( {#2} , {#3}
\big) }
\newcommand{\HHH}[3]{H^{{#1}} \Big( {#2} , {#3}
\Big) }
\newcommand{\MI}[1]{\mathcal{J}  ( {#1}
) }
\newcommand{\mult}{\textnormal{mult}}
\newcommand{\pr}{\prime}
\newcommand{\num}{ \equiv_{\text{num}} }
\newcommand{\Bl}{\text{Bl}}
\newcommand{\Linser}[1]{| \mspace{1.5mu} {#1}
\mspace{1.5mu} |}
\newcommand{\linser}[1]{\Linser{  {#1}  }}
 \newcommand{\pro}{\textnormal{pr}}
\newcommand{\opbox}{\mathop{\boxtimes}}
\begin{document}

\title{Local positivity, multiplier ideals,  and syzygies of abelian varieties}

\author{Robert Lazarsfeld}
  \address{Department of Mathematics, University of Michigan, Ann Arbor, MI
   48109}
 \email{{\tt rlaz@umich.edu}}
 \thanks{Research of the first author partially supported by NSF grant DMS-0652845.}

\author{Giuseppe Pareschi}
  \address{Dipartimento di matematica, Universit\`a di Roma, Tor Vergata, V.le della Ricerca Scientifica, I-00133 Roma, Italy}
 \email{{\tt pareschi@mat.uniroma2.it}}

\author{Mihnea Popa}
\address{Department of Mathematics, University of Illinois at Chicago, 851 S. Morgan Street, Chicago, IL 60607}
\email{\tt{mpopa@math.uic.edu}}
 \thanks{Research of the third author partially supported by NSF grant DMS-0758253 and  a Sloan Fellowship.}

\maketitle

\section*{Introduction}

In a recent paper \cite{HT}, Hwang and To observed that there is a relation between local positivity on an abelian variety $A$  and  the projective normality of suitable embeddings of $A$. The purpose of this note is to extend their result to higher syzygies, and to show that the language of multiplier ideals renders the computations extremely quick and transparent. 

Turning to details, let $A$ be an abelian variety of dimension $g$, and let $L$ be an ample line bundle on $A$. Recall that the \textit{Seshadri constant} $\eps(A, L)$ is a positive real number that measures the local positivity of $L$ at any given point $x \in A$: for example, it can be defined by counting asymptotically the number of jets that the linear series $\linser{kL}$ separates at $x$ as $k \to \infty$. We refer  to \cite[Chapter 5]{PAG} for a general survey of the theory, and in particular to Section 5.3 of that book for a discussion of local positivity on abelian varieties.

Our main result is 
\begin{theoremalpha} \label{Main.Thm}
Assume that \[ \eps(A,L) \ > \ (p+2) g. \]
Then $L$ satisfies property $(N_p)$.
\end{theoremalpha}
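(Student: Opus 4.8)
The plan is to reduce property $(N_p)$ to a single cohomological vanishing statement by a standard criterion, and then to extract that vanishing from the Seshadri hypothesis via the multiplier–ideal package — this last step is where the computation becomes short. For the reduction I would pass to the kernel bundle $M_L=\ker\bigl(H^0(A,L)\otimes\OO_A\to L\bigr)$: property $(N_p)$ holds as soon as
\[
H^i\bigl(A,\ \wedge^{q}M_L\otimes L^{\otimes r}\bigr)=0\fall i>0,\ 1\le q\le p+1,\ r\ge 1
\]
(the case $q=1$ being normal generation, i.e.\ $(N_0)$, and smaller $q$ the lower syzygy properties), and since $\wedge^{q}M_L$ is a direct summand of $M_L^{\otimes q}$ it is enough to treat $M_L^{\otimes q}\otimes L^{\otimes r}$. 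Now $M_L^{\otimes q}$ is the restriction of $M_L\boxtimes\cdots\boxtimes M_L$ along a diagonal embedding of $A$ into a self-product, and — crucially on an abelian variety — that diagonal has \emph{trivial} normal bundle (because $T_A$ is trivial), so it is resolved by a Koszul complex built from the difference morphisms. Feeding this in, the vanishing we need becomes the vanishing of higher cohomology of $\II_\Delta\otimes\bigl(L\boxtimes\cdots\boxtimes L\bigr)$, possibly twisted by line bundles in $\mathrm{Pic}^0$ (which is harmless), where $\Delta$ is a diagonal in a self-product $A^{\times m}$ with $m$ on the order of $p+2$ and $\Delta$ of codimension on the order of $(p+1)g$. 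Finally a group automorphism of $A^{\times m}$ moves $\Delta$ to a coordinate subgroup and carries $L\boxtimes\cdots\boxtimes L$ to an isomorphic — in particular still ample — bundle $H$.

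For the geometric step, homogeneity of $A$ makes $\eps(A,L)$ the Seshadri constant of $L$ at every point, and a short computation on the blow-up shows the Seshadri constant of $H$ along $\Delta$ is again $\eps(A,L)>(p+2)g$. Blowing up $\Delta$ with exceptional divisor $E$, the bound lets me choose a rational $c<1$ and an effective $\QQ$-divisor $D\Qnum c\,H$ whose multiplicity along $\Delta$ equals the codimension of $\Delta$ and — taking $D$ general in a very ample linear series on the blow-up — which is otherwise at worst log-canonical, so that $\JJ(A^{\times m},D)=\II_\Delta$. Since $K_{A^{\times m}}\cong\OO$ and $(1-c)H$ is ample, Nadel vanishing gives $H^i\bigl(A^{\times m},\ H\otimes\JJ(D)\bigr)=0$ for all $i>0$, i.e.\ $H^i(A^{\times m},\II_\Delta\otimes H)=0$, and unwinding the reduction yields $(N_p)$.

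The step I expect to be the real obstacle is the bookkeeping of the reduction: nailing down precisely which diagonal in which self-product governs $(N_p)$, transporting the auxiliary $\mathrm{Pic}^0$-twists correctly through the Koszul resolutions, and checking that all the numerical slack closes up at exactly $(p+2)g$ rather than something larger — together with the complementary point of arranging that $\JJ(D)$ is genuinely \emph{equal} to $\II_\Delta$ (controlling the multiplicity along $\Delta$ from above as well as from below), so that the Nadel vanishing transfers with no loss. If this becomes unwieldy I would instead invoke the $M$-regularity / continuous-global-generation criterion for syzygies on abelian varieties directly and feed the Seshadri bound into it via multiplier ideals of the form $\JJ\!\bigl(\|\tfrac{1}{p+2}L\|\bigr)$, which I expect is the clean path the authors take.
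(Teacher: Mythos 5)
There is a genuine gap, and it sits in the reduction step that you yourself flag as the likely obstacle. When you unwind $M_L^{\otimes(p+1)}\otimes L$ geometrically you do not land on a single diagonal of codimension about $(p+1)g$: the subscheme governing $(N_p)$ (Green's criterion, made precise by Inamdar) is the \emph{union} $\Sigma=\Delta_{0,1}\cup\cdots\cup\Delta_{0,p+1}$ of $p+1$ pairwise diagonals in $A^{\times(p+2)}$, each of codimension $g$, and the vanishing needed is $H^i\big(A^{\times(p+2)},\II_\Sigma\otimes L^{\boxtimes(p+2)}\otimes Q\big)=0$ for nef $Q$ (Proposition \ref{Basic.Vanishing}). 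This derails everything downstream of your reduction: a union of diagonals can be carried by a group automorphism to the union of coordinate slices $\bigcup_i\pro_i^{-1}(0)$, but not to a single coordinate subgroup, and the multiplier ideal you must manufacture is the ideal of a reducible set whose components meet in high codimension, not of one smooth center. Moreover the claim that the automorphism carries $L\boxtimes\cdots\boxtimes L$ to an isomorphic bundle is false: pulling back a box power of $L$ under subtraction maps introduces Poincar\'e-bundle twists and redistributes the powers of $L$ among the factors, and controlling the resulting loss of positivity is genuine work --- it is exactly the content of Propositions \ref{pullback1} and \ref{pullback2}, and it is where the factor $p+2$ in the hypothesis comes from.

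The second gap is the assertion that ``the Seshadri constant of $H$ along $\Delta$ is again $\eps(A,L)$.'' This is unjustified, and estimating positivity along a positive-dimensional subtorus or diagonal is precisely the delicate analytic input of Hwang--To that this paper is engineered to avoid. The authors use the hypothesis $\eps(A,L)>(p+2)g$ only at a \emph{point}: Lemma \ref{construct.single.divisor} produces $F_0\num\big(\tfrac{1-c}{p+2}\big)L$ on $A$ with $\MI{A,F_0}=\II_0$ (multiplicity $g$ at the origin, smooth tangent cone). Then $\II_\Sigma$ is realized as a multiplier ideal with no further positivity input, by taking the exterior sum $\sum_i\pro_i^*F_0$ on $A^{\times(p+1)}$ and pulling back under the smooth multi-subtraction map $\delta$, using that multiplier ideals commute with exterior products and with smooth pullback. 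What remains is the purely group-theoretic line-bundle identity $\delta^*\big(L^{\boxtimes(p+1)}\big)\otimes N=(L^{p+2})^{\boxtimes(p+2)}$ with $N$ nef, which makes $\big(L^{\boxtimes(p+2)}\big)(-E)$ ample so that Nadel vanishing applies. Your fallback via $M$-regularity and continuous global generation is also not the route taken here; to repair your argument you should replace the single high-codimension diagonal by $\Sigma$ and replace the Seshadri-along-$\Delta$ estimate by the point construction plus pullback just described.
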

\noi The reader may consult for instance  \cite[Chapter 1.8.D]{PAG}, \cite{GL} or    \cite{Eisenbud} for the definition of property $(N_p)$ and further references. Suffice it  to say here that   $(N_0)$ holds when $L$ defines a projectively normal embedding of $A$, while $(N_1)$ means that the homogeneous ideal of    $A$ in this embedding  is generated by quadrics. For $p >1$ the condition is that the first $p$ modules of syzygies among these quadrics are generated in minimal possible degree. The result  of Hwang and To in \cite{HT} is essentially  the case $p = 0$ of Theorem \ref{Main.Thm}.

In general it is very difficult to control Seshadri constants. However it was established in \cite{Lengths} that on an abelian variety they are related to a metric invariant introduced by Buser and Sarnak \cite{BS}. Specifically, write $A = V/\Lambda$, where $V$ is a complex vector space of dimension $g$ and $\Lambda \subseteq V$ is a lattice. Then $L$ determines a hermitian form $h = h_L$ on $V$, and  the Buser-Sarnak invariant is (the square of) the minimal length with respect to $h$ of a non-zero period of $\Lambda$:
\[   m(A, L) \ =_{\text{def}} \ \underset{0 \ne \ell \in \Lambda} \min  \, h_L(\ell, \ell). \]
The main result of \cite{Lengths} is that 
\[ \eps(A,L) \ \ge \ \frac{\pi}{4} \cdot m(A,L). \]
On the other hand, one can estimate $m(A,L)$ for very general $(A,L)$. In fact, suppose that  the polarization $L$ has elementary divisors \[  d_1 \, | \, d_2 \, | \, \ldots \, | \, d_g,  \] and put $d = d(L) = d_1 \cdot \ldots \cdot d_g$. By adapting an argument of Buser-Sarnak in the case of principal polarizations,
Bauer \cite{bauer} shows that if $(A,L)$ is very general, then
\[
m(A, L) \ \ge \ \frac{2^{1/g}}{\pi} \sqrt[^g]  {d \cdot g!}. 
\]
Therefore we obtain
\begin{corollaryalpha}
Assume that 
\[  d(L) \ > \ \frac{4^g (p+2)^g g^g}{2 g!} . \]
Then $(N_p)$ holds for very general $(A,L)$ of the given type.
\end{corollaryalpha}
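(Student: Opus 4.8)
The plan is to obtain the hypothesis $\eps(A,L) > (p+2)g$ of Theorem~\ref{Main.Thm} for very general $(A,L)$ of the prescribed type by simply concatenating the two displayed estimates recalled in the introduction, and then to invoke that theorem. First I would fix elementary divisors $d_1 \mid \cdots \mid d_g$ with $d = d_1\cdots d_g$ satisfying the numerical hypothesis, and apply Bauer's lower bound \cite{bauer}: for very general $(A,L)$ of this type one has
\[
m(A,L) \ \ge \ \frac{2^{1/g}}{\pi}\,\sqrt[^g]{d\cdot g!}.
\]
Combining this with the main inequality $\eps(A,L)\ \ge\ \tfrac{\pi}{4}\,m(A,L)$ of \cite{Lengths}, which holds for \emph{every} polarized abelian variety, I get
\[
\eps(A,L) \ \ge \ \frac{\pi}{4}\cdot\frac{2^{1/g}}{\pi}\,\sqrt[^g]{d\cdot g!} \ = \ \frac{1}{4}\,\sqrt[^g]{2\,d\cdot g!}
\]
for very general $(A,L)$ of the given type.

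The remaining step is purely arithmetic: the assumption $d(L) > \dfrac{4^g (p+2)^g g^g}{2\, g!}$ is equivalent, after multiplying through by $2g!$ and extracting $g$-th roots, to $\sqrt[^g]{2\,d\cdot g!} > 4(p+2)g$, i.e.\ to $\tfrac14\sqrt[^g]{2\,d\cdot g!} > (p+2)g$. Feeding this into the previous chain of inequalities yields $\eps(A,L) > (p+2)g$ for very general $(A,L)$, and Theorem~\ref{Main.Thm} then gives property $(N_p)$, completing the proof.

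I do not expect a genuine obstacle here; the corollary is a formal consequence of Theorem~\ref{Main.Thm} together with the two cited bounds. The only points deserving (minor) care are: that the locus in the relevant moduli space over which Bauer's estimate is valid is a countable intersection of dense Zariski-open sets, so the phrase ``very general $(A,L)$'' is meaningful and the conclusion is non-vacuous; the bookkeeping $2^{1/g}\sqrt[^g]{d\cdot g!} = \sqrt[^g]{2\,d\cdot g!}$ and the handling of the exponent $g$ in passing between the displayed inequality and the hypothesis on $d(L)$; and the fact that Theorem~\ref{Main.Thm} requires the \emph{strict} inequality $\eps(A,L) > (p+2)g$, which is precisely why the corollary is stated with a strict inequality on $d(L)$.
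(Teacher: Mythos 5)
Your proposal is correct and is exactly the argument the paper intends: the corollary is stated in the introduction as an immediate consequence ("Therefore we obtain") of chaining Bauer's bound $m(A,L)\ge \frac{2^{1/g}}{\pi}\sqrt[^g]{d\cdot g!}$ with the inequality $\eps(A,L)\ge\frac{\pi}{4}m(A,L)$ from \cite{Lengths} and then applying Theorem \ref{Main.Thm}. Your arithmetic reduction of the hypothesis on $d(L)$ to the strict inequality $\eps(A,L)>(p+2)g$ checks out.
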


The essential interest in statements of this sort occurs when $L$ is primitive (i.e. $d_1 = 1$), or at least when $d_1$ is small: as far as we know,  the present result is the first to give statements for higher syzygies of primitive line bundles in large dimension. By contrast, if $L$ is a suitable multiple of some ample line bundle, then much stronger statements are known. Most notably, the second author proved in \cite{Par} that $(N_p)$ always holds as soon as $d_1 \ge p +3$. This was strengthened and systematized in \cite{PP1} and \cite{PP2}, while (for $p = 0$) other statements appear in \cite{Iyer} and \cite{FG}.  

We conclude this Introduction by sketching a proof of the theorem of Hwang and To via the approach of the present paper. Following a time-honored device, one considers the diagonal $\Delta \subseteq A \times A$, with ideal sheaf $\II_\Delta$. Writing $L \boxtimes L  = \pro_1^* L \otimes \pro_2^*L$ for the exterior product of $L$ with itself, the essential point is to prove  \begin{equation} \HH{1}{A \times A}{L \boxtimes L \otimes \II_{\Delta}} \ = \ 0.
\tag{*} \end{equation}
Hwang and To achieve this by establishing a somewhat delicate upper bound on the volume of a one-dimensional analytic subvariety of a tubular neighborhood of  $\Delta$ (or more generally of a tubular neighborhood of any subtorus of an abelian variety). This allows them to control the positivity required to apply vanishing theorems on the blowup of $A \times A$ along $\Delta$. While their calculation is of substantial independent interest, for the task at hand it is considerably quicker to deduce (*) directly from Nadel vanishing.  

Specifically, using the hypothesis that $\eps(A,L) > 2g$, a standard argument (Lemma \ref{construct.single.divisor}) shows that for suitable $0 < c \ll 1$ one can construct an effective $\QQ$-divisor
\[  E_0 \ \num \ \left(\frac{ 1-c}{2}\right) L \]
on $A$ whose multiplier ideal vanishes precisely at the origin: $\MI{A, E_0} = \II_{0}$.
Now consider the difference map
\[  \delta : A \times A \lra A \ \ , \ \  (x,y) \mapsto x - y,   \]
and set $E = \delta^* E_0$. Since forming multiplier ideals commutes with pullback under smooth morphisms, we have on the one hand
\[   \MI{A \times A,E} \ = \ \delta^* \MI{A, E_0}  \ = \ \II_{\Delta}. \]
On the other hand,   one knows that 
\begin{equation}
L^2 \boxtimes L^2 \ = \ \delta^* (L) \otimes N 
\tag{**} \end{equation}
for a suitable nef line bundle $N$ on $A \times A$. Thanks to our choice of $E_0$, this implies   that $(L \boxtimes L) (-E)$ is ample. Therefore Nadel vanishing gives (*), as required. 

The proof of the general case of Theorem \ref{Main.Thm} proceeds along similar lines. Following an idea going back to Green \cite{Green}, one works on the $(p+2)$-fold product of $A$, where one has to check a vanishing involving the ideal sheaf of a union of pairwise diagonals.\footnote{The possibility of applying vanishing theorems on a blow-up to verify Green's criterion was noted already in \cite[Remark on p.\,600]{BEL}. Nowadays one can invoke the theory of   \cite{LiLi} to control the blow-ups involved: the pair-wise diagonals $\Delta_{0,1}, \ldots,  \Delta_{0,p+1}$ form a building set in the sense of \cite{LiLi} on the $(p+2)$-fold self product of a smooth variety. However in the case of abelian varieties treated here, elementary properties of multiplier ideals are used to obviate the need for any blowings up.} To realize this as a  multiplier ideal, we pull back a suitable divisor under a multi-subtraction map: this is carried out in \S 1. The positivity necessary for Nadel vanishing is verified using an analogue of (**)  established in \S2. Finally, \S3 contains some complements and variants, including a criterion for $L$ to define an embedding in which the homogeneous coordinate ring of $A$ is Koszul.

For applications of Nadel vanishing, one typically has to estimate the positivity of formal twists of line bundles by $\QQ$-divisors. To this end, we will allow ourselves to be a little sloppy in mixing additive and multiplicative notation. Thus given a $\QQ$-divisor $D$ and a line bundle $L$, the statement 
$ D \num b L $ is intended to mean that $D$ is numerically equivalent to $b \cdot c_1(L)$. Similarly, to say that $(b L )(-D)$ is ample indicates that $b \cdot c_1(L) - D$ is an ample numerical class. We trust that no confusion will result.

We are grateful to Thomas Bauer, Jun-Muk Hwang and Sam Payne for  valuable discussions.

\section{Proof of Theorem A}

As in the Introduction, let $A$ be an abelian variety of dimension $g$, and let $L$ be an ample line bundle on $A$. 

We start by recalling a geometric criterion that guarantees property $(N_p)$ in our setting. 
Specifically, form  the $(p+2)$-fold product  $X = A^{\times(p+2)}$ of $A$ with itself, and inside $X$ consider the reduced algebraic set  
 \begin{align*}
\Sigma  \   & =  \  \big\{ (x_0, \ldots, x_{p+1}) \mid x_0 = x_i \text{ for some $1 \le i \le p+1$ }   \big\}  \\  \ & =  \  \Delta_{0,1} \, \cup \, \Delta_{0,2} \, \cup  \, \ldots \, \cup \, \Delta_{0, p+1} \end{align*}
arising as the union of the  indicated pairwise diagonals. Thus $\Sigma$ has $p+1$ irreducible components, each of codimension $g$ in $X$. 

It was observed by Green \cite[\S 3]{Green} that property $(N_p)$ for $L$ is implied by a vanishing on $X$ involving the ideal sheaf of $\II_{\Sigma}$, generalizing the condition (*) in the Introduction for projective normality.   We refer to \cite{Ina} for a statement and  careful discussion of the criterion in general.\footnote{The argument appearing in \cite{Green} is somewhat oversimplified.} In the present situation, it shows that Theorem  \ref{Main.Thm} is a consequence of the following
\begin{proposition} \label{Basic.Vanishing}
Assume that $\eps(A,L) > (p+2)g$. Then 
\[
\HHH{i}{A^{\times (p+2)} }{ \opbox^{p+2}L   \otimes Q \otimes \II_{\Sigma} } \ = \ 0 
\]
for any nef line bundle $Q$ on $X$ and all $i > 0$.\footnote{As explained in \cite{Ina} one actually needs the vanishings:
\[ \HH{i}{A^{\times(p^\pr +2)}}{L^q \boxtimes L \boxtimes \ldots \boxtimes L \otimes \II_{\Sigma}}  \ = \ 0 \]
for $0 \le p^\pr \le p$ and $q \ge 1$, but these are all implied by the assertion of the Proposition.} \end{proposition}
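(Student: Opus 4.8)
The plan is to transplant to the $(p+2)$-fold product $X=A^{\times(p+2)}$ the argument sketched in the Introduction for the diagonal in $A\times A$. First I would invoke Lemma~\ref{construct.single.divisor}, together with the hypothesis $\eps(A,L)>(p+2)g$, to produce a number $0<c\ll 1$ and an effective $\QQ$-divisor $E_0\num\frac{1-c}{p+2}L$ on $A$ with $\MI{A,E_0}=\II_0$. For $1\le i\le p+1$ let $\delta_i\colon X\to A$ be the subtraction map $(x_0,\dots,x_{p+1})\mapsto x_0-x_i$, and let $\Phi=(\delta_1,\dots,\delta_{p+1})\colon X\to A^{\times(p+1)}$, so that $\delta_i=q_i\circ\Phi$ for the $i$-th projection $q_i$ of $A^{\times(p+1)}$. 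Since $\delta_i^{-1}(0)=\Delta_{0,i}$, the preimage $\Phi^{-1}\!\big(\bigcup_{i=1}^{p+1}q_i^{-1}(0)\big)$ is precisely $\Sigma$. Set $E:=\sum_{i=1}^{p+1}\delta_i^*E_0=\Phi^*\big(\sum_{i=1}^{p+1}q_i^*E_0\big)$, an effective $\QQ$-divisor on $X$. The proof then comes down to two claims: (i)~$\MI{X,E}=\II_\Sigma$, and (ii)~$(\opbox^{p+2}L)\otimes Q\otimes\OO_X(-E)$ is ample for every nef $Q$. Granting these, Nadel vanishing on the abelian variety $X$---where $K_X=\OO_X$---gives $\HH{i}{X}{(\opbox^{p+2}L)\otimes Q\otimes\MI{X,E}}=0$ for all $i>0$, which is the Proposition.

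For (i): the key observation is that $\Phi$ is a smooth surjective morphism, since $(x_0,\dots,x_{p+1})\mapsto(x_0,\,x_0-x_1,\dots,x_0-x_{p+1})$ is an automorphism of $X$ and $\Phi$ is its composition with the projection onto the last $p+1$ factors. Because multiplier ideals commute with pullback under smooth morphisms, and because on a product the multiplier ideal of a sum of divisors pulled back from distinct factors equals the \emph{product} of the individual multiplier ideals, we get $\MI{X,E}=\Phi^*\big(\prod_{i=1}^{p+1}q_i^*\II_0\big)$. Finally, since the factors of $A^{\times(p+1)}$ contribute disjoint sets of local coordinates, the product $\prod_i q_i^*\II_0$ coincides with the intersection $\bigcap_i q_i^*\II_0$, which is the ideal of $\bigcup_i q_i^{-1}(0)$; pulling back by the smooth map $\Phi$ yields $\MI{X,E}=\II_\Sigma$. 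It is worth noting that one genuinely needs product $=$ intersection here: subadditivity by itself only gives $\MI{X,E}\subseteq\II_\Sigma$, which would not suffice (Nadel gives vanishing for the multiplier ideal, not for $\II_\Sigma$).

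For (ii), the crucial ingredient is an analogue of $(**)$ on $X$, which I would phrase as: the class $N:=(p+2)\,c_1(\opbox^{p+2}L)-c_1\big(\Phi^*(\opbox^{p+1}L)\big)$ is nef. Granting this, since $E\num\frac{1-c}{p+2}\,\Phi^*(\opbox^{p+1}L)$ we obtain $(\opbox^{p+2}L)(-E)\num c\cdot c_1(\opbox^{p+2}L)+\tfrac{1-c}{p+2}\,N$, which is ample (an ample class plus a nef one); tensoring with the nef bundle $Q$ preserves ampleness, so (ii) follows. To prove that $N$ is nef I would expand in Künneth components. Write $\ell^{(j)}$ for the pullback of $c_1(L)$ by the $j$-th projection $X\to A$, and let $\gamma_{0i}$ be the cross-term singled out by $m_{0i}^*c_1(L)=\ell^{(0)}+\ell^{(i)}+\gamma_{0i}$ for the addition map $m_{0i}(x)=x_0+x_i$; correspondingly $\delta_i^*c_1(L)=\ell^{(0)}+\ell^{(i)}-\gamma_{0i}$. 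A direct computation then gives
\[
N\ =\ \ell^{(0)}\;+\;(p+1)\sum_{i=1}^{p+1}\ell^{(i)}\;+\;\sum_{i=1}^{p+1}\gamma_{0i}.
\]
The decisive point is that the right-hand side equals $\tfrac{1}{p+1}\sum_{i=1}^{p+1}g_i^*c_1(L)$, where $g_i\colon X\to A$ is the surjective homomorphism $x\mapsto x_0+(p+1)\,x_i$; being a positive $\QQ$-combination of pullbacks of the ample class $c_1(L)$ under surjective maps, $N$ is nef.

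The main obstacle---the one place where something genuinely clever is needed rather than bookkeeping---is precisely this last numerical identity, in particular the choice of the homomorphisms $g_i$. If instead one tries to absorb each cross-term $\gamma_{0i}$ into $m_{0i}^*c_1(L)$---the literal analogue of writing $\gamma=m^*\ell-\ell_1-\ell_2$ when $p=0$---then one spends a full $\ell^{(0)}$ for each of the $p+1$ pairwise diagonals, and the argument only closes under the weaker hypothesis $\eps(A,L)>2(p+1)g$. Weighting the $i$-th variable by $p+1$ inside $g_i$ is exactly what lets each $\gamma_{0i}$ be absorbed using only $\tfrac{1}{p+1}$ of an $\ell^{(0)}$---a total of one $\ell^{(0)}$, which is all that $(p+2)\,c_1(\opbox^{p+2}L)$ has to spare---and this yields the sharp hypothesis $\eps(A,L)>(p+2)g$. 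Everything else is routine: the construction of $E_0$ is standard local positivity on abelian varieties, the identification of $\MI{X,E}$ is a formal consequence of the smoothness of $\Phi$ and the product structure, and the conclusion is a one-line application of Nadel vanishing with $K_X$ trivial.
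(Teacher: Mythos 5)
Your argument is correct, and for the multiplier-ideal half it coincides with the paper's: the same divisor $F_0 \num \tfrac{1-c}{p+2}L$ from Lemma \ref{construct.single.divisor}, the same exterior sum on $A^{\times(p+1)}$, the same pullback under the difference map $\delta=\Phi$, the same appeal to \cite[9.5.22]{PAG} for the equality (not just inclusion) $\MI{Y,E_0}=\prod_i \pro_i^*\II_0=\II_\Lambda$, and the same one-line Nadel vanishing at the end. Where you genuinely diverge is in proving that $N=(p+2)c_1(\opbox^{p+2}L)-\delta^*c_1(\opbox^{p+1}L)$ is nef, i.e.\ Proposition \ref{pullback1}. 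The paper establishes an actual isomorphism of line bundles, $\delta^*\big(\opbox^{p+1}L\big)\otimes b^*L\otimes\big(\otimes_{1\le i<j\le p+1}d_{ij}^*L\big)\cong\opbox_{k=0}^{p+1}\big(L^{p+2-k}\otimes(-1)^*L^k\big)$, via the Poincar\'e-bundle identities of Lemma \ref{identities}, so that $N$ is exhibited as a product of $1+\binom{p+1}{2}$ pullbacks of $L$ under surjective addition/subtraction maps. You instead work purely numerically: your expansion $N=\ell^{(0)}+(p+1)\sum_i\ell^{(i)}+\sum_i\gamma_{0i}$ agrees with what the paper's expression also expands to, and your identification of this class with $\tfrac{1}{p+1}\sum_{i=1}^{p+1}g_i^*c_1(L)$ for $g_i(x)=x_0+(p+1)x_i$ checks out, since biadditivity of the cross-term (theorem of the cube) gives $g_i^*c_1(L)=\ell^{(0)}+(p+1)^2\ell^{(i)}+(p+1)\gamma_{0i}$ numerically. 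Both routes legitimately conclude nefness from surjectivity of the maps involved; yours is more economical ($p+1$ maps, no Poincar\'e-bundle bookkeeping) and loses nothing here because only the numerical class of $N$ matters for ampleness, while the paper's version produces a genuine nef line bundle and its intermediate identities are reused for the Koszul variant (Corollary \ref{Kosz.Pos.Corollary}). Your closing heuristic about the sharpness of $(p+2)g$ versus a cruder absorption of the cross-terms is a side remark and does not affect the proof.
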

 
 The plan is to deduce the proposition from Nadel vanishing. To this end, it suffices to produce an effective $\QQ$-divisor $E$ on $X$ having two properties:
\begin{gather}
\MI{X,E} \ = \ \II_{\Sigma} \label{first.eqn}
\\
\Big(\opbox^{p+2}L \Big)(-E) \ \text{ is ample}. \label{second.eqn}
\end{gather}
The rest of this section will be devoted to the construction of $E$ and the verification of these requirements. 
 
 The first point is quite standard:
  \begin{lemma} \label{construct.single.divisor}
 Assuming that $\eps(A,L) > (p+2)g$, there exists an effective $\QQ$-divisor $F_0$ on $A$ having the properties that 
 \[  F_0 \ \num \ \left (\frac{1-c}{p+2} \right) L \]
 for some $0 < c \ll 1$, and 
 \[  \MI{A,F_0} \ = \ \II_{0}. \]
 \end{lemma}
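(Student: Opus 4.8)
The plan is the classical one for realizing the ideal sheaf $\II_0$ of a point as a multiplier ideal: descend a general divisor from the blow-up of $A$ at the origin, using the Seshadri constant to supply the positivity that keeps the computation under control. First I would absorb the hypothesis into the coefficient. Since $\eps(A,L) > (p+2)g$, choose a rational number $0 < c \ll 1$ with $(1-c)\,\eps(A,L) > (p+2)g$ and set $b = \tfrac{1-c}{p+2}$; by homogeneity of the Seshadri constant this is exactly the statement that $\eps(A, bL) = b\cdot \eps(A,L) > g = \dim A$. So it suffices to produce an effective $\QQ$-divisor $F_0$ with $F_0 \Qlin bL$ and $\MI{A,F_0}=\II_0$, knowing only that $\eps(A,bL) > g$.

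Let $\mu \colon A' = \Bl_0 A \to A$ be the blow-up of the origin, with exceptional divisor $T \cong \PP^{g-1}$, so $K_{A'/A} = (g-1)\,T$. By the basic description of Seshadri constants (see \cite[Chapter~5]{PAG}), $\mu^*(bL) - t\,T$ is an ample $\QQ$-class for every rational $t$ with $0 < t < \eps(A,bL)$. I would then pick a rational number $a$ with $g \le a < \min\{\,g+1,\ b\cdot\eps(A,L)\,\}$ --- possible precisely because $b\cdot\eps(A,L) > g$ --- so that $\rndown{a} = g$ while $\mu^*(bL) - a\,T$ remains ample. For $m$ large and divisible enough that $mb, ma \in \ZZ$, the linear system $\linser{\mu^*(mbL) - ma\,T}$ is base-point free, so by Bertini a general member $D'$ is smooth and meets $T$ transversally; thus $D' + T$ has simple normal crossings.

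Now set $D := \mu_* D'$. Pushing the class forward gives $D \sim mbL$, and since $D'$ is the strict transform of $D$ we get $\mu^* D = D' + ma\,T$. Put $F_0 := \tfrac1m D$: this is effective, $F_0 \Qlin bL$, $\mu^* F_0 = \tfrac1m D' + a\,T$, and $\mu$ is a log resolution of $F_0$. Since the coefficient $\tfrac1m$ is $<1$ and $\rndown{a}=g$, one has $\rndown{\mu^* F_0} = g\,T$, so
\[
\MI{A,F_0} \ = \ \mu_*\OO_{A'}\big(K_{A'/A} - \rndown{\mu^* F_0}\big) \ = \ \mu_*\OO_{A'}\big((g-1-g)\,T\big) \ = \ \mu_*\OO_{A'}(-T) \ = \ \II_0 ,
\]
with no contribution from away from the origin, where $\mu$ is an isomorphism and $D$ appears with coefficient $\tfrac1m < 1$. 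In particular $F_0 \num \big(\tfrac{1-c}{p+2}\big)L$, completing the construction.

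The whole argument is routine; the points that call for a little attention are the homogeneity (and translation-invariance) of $\eps$, used to lower the numerical threshold to $\dim A$, the Bertini step furnishing the normal-crossings log resolution, and --- the genuine crux --- the choice of $a$ in the narrow window $[g,g+1)$. It is exactly this that makes $\MI{A,F_0}$ equal to $\II_0$ on the nose: for $a\ge g+1$ one would land in a proper power of $\II_0$, and for $a<g$ the multiplier ideal would be the unit ideal.
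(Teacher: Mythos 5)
Your proposal is correct and follows essentially the same route as the paper: blow up the origin, use the Seshadri hypothesis to make $\rho^*((1-c)L) - (p+2)gT$ ample, take a general member by Bertini, push forward and rescale so the exceptional multiplicity becomes $g$ and the multiplier ideal computes to $\II_0$. The only cosmetic difference is that you allow the exceptional coefficient $a$ to range in $[g, g+1)$ where the paper fixes the multiplicity to be exactly $(p+2)gk$.
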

 \noi Here naturally  $\II_{0}\subseteq \OO_A$ denotes the ideal sheaf of the origin $0 \in A$. 
 \begin{proof}[Proof of Lemma]
We claim that for suitable $0 < c \ll 1$ and sufficiently divisible $k \gg 0$, there exists a divisor $D \in \linser{ k(1-c)L}$ with 
 \[
 \mult_0(D) \ = \ (p+2)g k ,
 \]
where in addition $D$ has a smooth tangent cone at the origin $0 \in A$ and is  non-singular away from $0$. Granting this, it suffices to put $F_0 = \tfrac{1}{(p+2)k}D$. As for the existence of $D$, let 
\[ \rho: A^\pr = \Bl_0(A) \lra A\] be the blowing up of $A$ at $0$, with exceptional divisor $T \subseteq A^\pr$. Then by definition of $\eps(A,L)$ the class $(1-c) \rho^*L  - (p+2)gT$ is ample on $A^\pr$ for $0 < c \ll 1$. If $D^\pr$ is a general divisor in the linear series  corresponding to a large multiple of this class, then Bertini's theorem on $A^\pr$ implies that $D = \rho_*(D^\pr) $ has the required properties.  \end{proof} 
  
  Now form the $(p+1)$-fold product $Y = A^{\times (p+1)}$ of $A$ with itself, and write $\pro_i : Y \lra A$ for the $i^{\text{th}}$ projection. Consider the reduced algebraic subset 
  \begin{align*}
 \Lambda \ &= \ \bigcup_{i = 1}^{p+1} \, \pro_i^{-1} ( \,  0 \,  ) \\
 & = \ \big \{ (y_1, \ldots, y_{p+1} ) \mid  y_i = 0 \text{ for some $1 \le i \le p+1 $
} \big \}. 
 \end{align*}
We wish to realize $\II_\Lambda$ as a multiplier ideal, to which end we simply consider the ``exterior sum" of the divisors $F_0$ just constructed.  Specifically, put
\[  E_0 \ = \ \sum_{i=1}^{p+1} \,  \pro_i^* (F_0) . \] 
Thanks to \cite[9.5.22]{PAG} one has
\[\MI{Y, E_0} \  = \ \prod_{i=1}^{p+1} \, \pro_i^* \MI{A, F_0} \ = \ \prod_{i=1}^{p+1}\, \pro_i^*\,  \II_{0},  
 \]
 i.e. $\MI{Y, E_0} = \II_{\Lambda}$, as desired. 
 
Next, consider the map
 \begin{equation}
 \delta = \delta_{p+1} : A^{\times ({p+2})} \lra A^{\times (p+1)} \ \ , \ \ (x_0, x_1, \ldots, x_{p+1}) \mapsto (x_0 - x_1, \ldots, x_0 - x_{p+1}), 
 \end{equation}
 and note that $\Sigma = \delta^{-1}\Lambda$ (scheme-theoretically). Set
 \[
E \ = \ \delta^* (E_0). 
 \]
 Since forming multiplier ideals commutes with pulling back under smooth morphisms (\cite[9.5.45]{PAG}), we find that
 \[
 \MI{X, E} \ = \ \delta^* \, \MI{Y , E_0} \ = \ \delta^* \II_{\Lambda} \ = \ \II_{\Sigma},
 \]
 and thus \eqref{first.eqn}
is satisfied.
 
 In order to verify \eqref{second.eqn}, we use the following assertion, which will be established in the next section.
 \begin{proposition} \label{pullback1} 
 There is a nef line bundle $N$ on $X = A^{\times (p+2)}$ with the property that
\begin{equation}\label{num.equiv.eqn} 
 \delta^* \Big( \opbox^{p+1} L \Big ) \,  \otimes N \ = \ \opbox^{p+2} \, L^{p+2} \, . 
\end{equation}  
 \end{proposition}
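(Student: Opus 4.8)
The plan is to compute the left-hand side of \eqref{num.equiv.eqn} by pulling back each factor of $\opbox^{p+1}L$ through $\delta$ and comparing the result coordinate-by-coordinate to $\opbox^{p+2}L^{p+2}$, extracting the discrepancy as a line bundle $N$ which one then checks to be nef. Concretely, write $q_i\colon A^{\times(p+1)}\to A$ for the projections (so $\mathrm{pr}_i$ in the notation above) and $p_j\colon A^{\times(p+2)}\to A$, $j=0,\dots,p+1$, for those of $X$. Since $\opbox^{p+1}L=\bigotimes_{i=1}^{p+1}q_i^*L$ and $q_i\circ\delta$ is the map $(x_0,\dots,x_{p+1})\mapsto x_0-x_i$, i.e. $m\circ(p_0,p_i)$ where $m\colon A\times A\to A$ is subtraction, we get
\[
\delta^*\Big(\opbox^{p+1}L\Big)\ =\ \bigotimes_{i=1}^{p+1}(p_0,p_i)^*m^*L.
\]
So the whole computation reduces to the standard identity on $A\times A$ describing $m^*L$ in the N\'eron--Severi group.

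The key input is the theorem of the cube (or equivalently the seesaw/biextension identity): on $A\times A$, writing $\sigma$ and $\mu$ for the sum and difference maps and $a,b$ for the two projections, one has
\[
\mu^*L\ \equiv_{\mathrm{num}}\ a^*L^2\otimes b^*L^2\otimes(\sigma^*L)^{-1}\otimes(\text{symmetric correction})
\]
more usefully, for an ample $L$ the class $m^*L + a^*L + b^*L$ (difference map) is $\equiv_{\mathrm{num}} 2a^*L + 2b^*L - (\text{something nef?})$; the precise clean statement I would use is that $a^*L\otimes b^*L\otimes (m^*L)^{-1}\otimes a^*L \otimes b^*L$, i.e. $2a^*L+2b^*L-m^*L$, is represented by a nef (indeed semiample) class, which follows from the theorem of the square together with the fact that $(-1)^*L\equiv_{\mathrm{num}}L$. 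Plugging this in for each pair $(p_0,p_i)$, each factor contributes $p_0^*L^2\otimes p_i^*L^2$ minus a nef class, and summing over $i=1,\dots,p+1$ yields
\[
\delta^*\Big(\opbox^{p+1}L\Big)\ \equiv_{\mathrm{num}}\ p_0^*L^{2(p+1)}\otimes\bigotimes_{i=1}^{p+1}p_i^*L^2\ \otimes\ N^{-1}
\]
with $N$ a sum of pullbacks of nef classes, hence nef. Comparing with $\opbox^{p+2}L^{p+2}=\bigotimes_{j=0}^{p+1}p_j^*L^{p+2}$, the $p_0$-coefficients match ($2(p+1)=p+2$ exactly when $p=0$ — so here one must be careful) and the $p_i$-coefficients for $i\ge1$ are $2$ versus $p+2$, so the genuine discrepancy $N$ also absorbs a sum of pullbacks $p_i^*L^{p}$ for $i\ge 1$ and $p_0^*L^{p+2-2(p+1)}$; one checks the latter exponent is nonnegative only when $p=0$, which signals that the correct bookkeeping pushes all the ``extra'' positivity onto the off-diagonal factors. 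The cleanest route, which I would actually write, is to avoid sign-chasing by \emph{defining} $N$ to be the line bundle making \eqref{num.equiv.eqn} an equality — it exists since both sides have the same class structure once one uses theorem-of-the-cube relations — and then verifying nefness of that $N$ directly.

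The main obstacle is precisely this nefness verification: one must identify $N$ explicitly enough to see it is nef. I expect the right way is to observe that $N$ is a tensor product of pullbacks, under the various projections $X\to A$ and the partial difference maps $X\to A$, of $L$ and of bundles of the form $p_i^*L^2\otimes p_0^*L^2\otimes(p_0,p_i)^*m^*L^{-1}$; the last is nef on $A\times A$ because after pulling back by $\mathrm{id}\times(-1)$ it becomes $2a^*L+2b^*L-\sigma^*L$, which is globally generated (it is $\phi$ applied to $L$ in the standard notation, or visibly so from the surjection $H^0(L)^{\otimes?}\to\dots$), hence nef, and nefness is preserved by pullback and tensor product. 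Assembling these observations gives $N$ nef. An alternative, perhaps slicker, is to note that the map $\delta$ extends to an isogeny-type diagram: $(p_0,p_1,\dots,p_{p+1})\mapsto(p_0,p_0-p_1,\dots,p_0-p_{p+1})$ is an \emph{automorphism} $\Phi$ of $X=A^{\times(p+2)}$ with $\delta=(\text{projection to last }p+1\text{ factors})\circ\Phi$, so $\delta^*(\opbox^{p+1}L)=\Phi^*(p_1^*L\otimes\dots\otimes p_{p+1}^*L)$; then \eqref{num.equiv.eqn} becomes an identity comparing $\Phi^*$ of a box-product with a box-product, and one computes $\Phi^*p_i^*L$ for each $i$ by theorem-of-the-cube, the total correction being manifestly nef. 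I would present the automorphism version, since it makes the bookkeeping transparent and isolates the only real content — the cube-theorem computation of $\Phi^*$ on divisor classes — into one clean paragraph.
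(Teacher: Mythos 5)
Your setup is sound and runs parallel to the paper's: you reduce $\delta^*\big(\opbox^{p+1}L\big)$ to the pairwise difference maps $(x_0,\dots,x_{p+1})\mapsto x_0-x_i$ and invoke the numerical identity $m^*L \num 2\pro_1^*L+2\pro_2^*L-s^*L$ on $A\times A$ (with $m$ the difference and $s$ the sum map), which is correct. The genuine gap is exactly at the point you flag and then wave away. Summing your identity over $i=1,\dots,p+1$ places $2(p+1)$ copies of $L$ in the $x_0$-slot, while the target $\opbox^{p+2}L^{p+2}$ has only $p+2$ there; hence the bundle $N$ forced by \eqref{num.equiv.eqn} satisfies
\[
N \ \num\ -p\,\pro_0^*L \ +\ p\sum_{i=1}^{p+1}\pro_i^*L\ +\ \sum_{i=1}^{p+1}a_{0i}^*L, \qquad a_{0i}(x)=x_0+x_i,
\]
and the negative coefficient does not disappear for $p\ge 1$. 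Worse, the building blocks you propose for ``assembling'' nefness --- pullbacks of $L$ under the projections, under the pairwise differences $d_{ij}$, and the classes $a_{0i}^*L$ --- provably cannot express $N$ with nonnegative coefficients once $p\ge 1$: identifying $f^*L$ numerically with the quadratic form $h(f(x),f(x))$, the cross terms $2h(x_0,x_i)$ present in $N$ force each $a_{0i}^*L$ to appear with coefficient at least $1$, which contributes at least $(p+1)\,h(x_0,x_0)$, whereas $N$ contains $h(x_0,x_0)$ with coefficient exactly $1$ and none of your listed generators contributes negatively to it. So ``assembling these observations'' cannot yield nefness, and the automorphism reformulation at the end merely repackages the identical bookkeeping without resolving it.

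The missing idea --- the actual content of the paper's Proposition~\ref{pullback2} --- is to bring in the \emph{total} addition map $b(x)=x_0+x_1+\cdots+x_{p+1}$ together with the differences $d_{ij}=x_i-x_j$ taken only among the last $p+1$ coordinates: one proves the exact identity
\[
\delta^*\Big(\opbox^{p+1}L\Big)\otimes b^*L\otimes\Big(\underset{1\le i<j\le p+1}{\otimes} d_{ij}^*L\Big)\ =\ \opbox^{p+1}_{k=0}\Big(L^{p+2-k}\otimes(-1)^*L^{k}\Big),
\]
so that $N=b^*L\otimes\bigotimes_{1\le i<j\le p+1}d_{ij}^*L$ (up to a topologically trivial twist) is manifestly nef, being a product of pullbacks of the ample $L$ under surjective homomorphisms $X\to A$. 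The point your bookkeeping misses is that $b^*L$ supplies all the needed cross terms $2h(x_0,x_i)$ at the cost of only a \emph{single} $h(x_0,x_0)$, and its unwanted cross terms $2h(x_i,x_j)$ for $1\le i<j$ are exactly cancelled by the $d_{ij}^*L$ factors. That one combinatorial choice, verified in the paper via the Poincar\'e-bundle identities of Lemmas~\ref{identities} and~\ref{pullback3}, is what your argument lacks.
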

 Granting this, the property \eqref{second.eqn} -- and with it, Proposition \ref{Basic.Vanishing}
-- follows easily. Indeed, note that 
 \[ E\  \num \ \Big(\frac{1-c}{p+2}\Big) \cdot \Big( \delta^* \big(  \opbox^{p+1} L \big) \Big). \] 
Therefore \eqref{num.equiv.eqn} implies that
 \[
\Big(\opbox^{p+2}L \Big)(-E) \ \num \ c \cdot \Big(\opbox^{p+2}L \Big) \, + \, \Big(\frac{1-c}{p+2}\Big) \cdot N,
 \]
  which is ample. This completes the proof of Theorem \ref{Main.Thm}.

\section{Proof of Proposition \ref{pullback1}}

Let $A$ an abelian variety and $p$ a non-negative integer. Define the following maps:
$$b: A^{\times ({p+2})} \rightarrow A, \qquad  (x_0,x_1, \ldots, x_{p+1})\mapsto x_0 + x_1 + \ldots + x_{p+1}.$$
and for any $0 \le i < j \le p+1$ 
 $$d_{ij}: A^{\times ({p+2})}\rightarrow A, \qquad  (x_0,x_1, \ldots, x_{p+1})\mapsto x_i - x_j.$$
Recall the map $\delta$ from the previous section:
$$\delta: A^{\times ({p+2})} \lra A^{\times (p+1)} \ \ , \ \ (x_0, x_1, \ldots, x_{p+1}) 
\mapsto (x_0 - x_1, \ldots, x_0 - x_{p+1}).$$ 
Proposition \ref{pullback1} above follows from the following more precise statement.\footnote{Note that $L$ 
and $(-1)^*L$ differ by a topologically trivial line bundle.}

\begin{proposition}\label{pullback2}
For any ample line bundle $L$ on $A$ we have
$$ \delta^* \Big( \opbox^{p+1} L \Big ) \otimes \big( b^* L \big) \, \otimes \, \Big( \underset{1\le i < j}{\otimes} d_{ij}^*L \Big)
\ = \ \opbox^{p+1}_{k=0} \, \Big( L^{p+2-k}\otimes (-1)^* L^k \Big) \, .$$ 
\end{proposition}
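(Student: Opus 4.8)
The plan is to reduce every factor on the left-hand side to a pullback of $L$ along an integral linear combination of the projections $\pro_0,\dots,\pro_{p+1}\colon X=A^{\times(p+2)}\to A$, and then to expand using the theorem of the cube. First I would rewrite $\delta^*\big(\opbox^{p+1}L\big)$. Since the $i$-th component of $\delta$ is $\pro_0-\pro_i=d_{0i}$ for $1\le i\le p+1$, we get $\delta^*\big(\opbox^{p+1}L\big)=\bigotimes_{i=1}^{p+1}d_{0i}^*L$, so the left-hand side of the Proposition collapses to
\[ b^*L \ \otimes \ \bigotimes_{0\le i<j\le p+1} d_{ij}^*L , \]
a tensor product of line bundles each of the form $\phi^*L$ with $\phi$ a $\ZZ$-linear combination of the $\pro_k$.

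Next I would invoke the theorem of the cube in its relative form: for any variety $T$, the map $\phi\mapsto\phi^*L$ from $\mathrm{Hom}(T,A)$ to $\mathrm{Pic}(T)$ sends the zero map to $\OO_T$ and satisfies the cube relation $(f+g+h)^*L\otimes f^*L\otimes g^*L\otimes h^*L = (f+g)^*L\otimes(g+h)^*L\otimes(f+h)^*L$; equivalently, it is quadratic. Applied to $T=X$ and the projections, and writing $P_{ij}:=(\pro_i+\pro_j)^*L\otimes\pro_i^*L^{-1}\otimes\pro_j^*L^{-1}$ (so $P_{ij}=P_{ji}$), this yields on the one hand the standard expansion
\[ b^*L \ = \ \Big(\sum_{k=0}^{p+1}\pro_k\Big)^{*}L \ = \ \bigotimes_{k=0}^{p+1}\pro_k^*L \ \otimes \bigotimes_{0\le i<j\le p+1} P_{ij} , \]
and on the other hand, taking $(f,g,h)=(\pro_i,\pro_j,-\pro_j)$ and using $(-\pro_j)^*L=\pro_j^*(-1)^*L$ together with $0^*L=\OO_X$, the identity
\[ d_{ij}^*L \ = \ (\pro_i-\pro_j)^*L \ = \ \pro_i^*L\otimes\pro_j^*(-1)^*L\otimes P_{ij}^{-1} . \]

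Finally I would substitute these into the displayed form of the left-hand side. The bundles $P_{ij}$ produced by $b^*L$ cancel against the $P_{ij}^{-1}$ produced by the $d_{ij}^*L$ with $i<j$, leaving
\[ \bigotimes_{k=0}^{p+1}\pro_k^*L \ \otimes \bigotimes_{0\le i<j\le p+1}\big(\pro_i^*L\otimes\pro_j^*(-1)^*L\big) . \]
Now one counts multiplicities: $\pro_k^*L$ occurs once from the first product and once for each $j>k$, hence with total exponent $1+(p+1-k)=p+2-k$; while $\pro_k^*(-1)^*L$ occurs once for each $i<k$, hence with total exponent $k$. Therefore the left-hand side equals $\bigotimes_{k=0}^{p+1}\pro_k^*\big(L^{p+2-k}\otimes(-1)^*L^{k}\big)=\opbox_{k=0}^{p+1}\big(L^{p+2-k}\otimes(-1)^*L^{k}\big)$, which is exactly the right-hand side.

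I do not expect a genuine obstacle: once the left-hand side is put in the form $b^*L\otimes\bigotimes_{i<j}d_{ij}^*L$, the argument is just the theorem of the cube followed by bookkeeping, and the cancellation of the $P_{ij}$ is the whole point. The two things requiring care are to use the cube theorem in the version valid for arbitrary morphisms out of $X$ (not merely for the group law of $A$), and to carry $(-1)^*L$ through the computation since $L$ is not assumed symmetric. As the footnote notes, $(-1)^*L$ and $L$ differ only by a topologically trivial bundle, so for the numerical, nefness statement actually needed in Proposition \ref{pullback1} one could discard these corrections and argue with numerical classes; retaining them gives the sharper isomorphism stated here.
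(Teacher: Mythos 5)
Your proof is correct, and its overall shape --- expand $b^*L$ and each $d_{ij}^*L$ into products of $\pro_k^*L$, $\pro_k^*(-1)^*L$, and correction bundles $P_{ij}$, observe that the $P_{ij}$ cancel exactly between $b^*L$ and the $d_{ij}^*L$ over all $0\le i<j\le p+1$, then count multiplicities --- is the same as the paper's. The difference lies in how the two expansion identities are justified. The paper introduces the Poincar\'e bundle, sets $P=(1\times\phi_L)^*\cP$ and $P_{ij}=\pro_{ij}^*P$, and derives $b^*L\cong(\opbox^{p+2}L)\otimes\bigotimes_{i<j}P_{ij}$ by induction from the seesaw identities $a^*L\cong(L\boxtimes L)\otimes P$, $d^*L\cong(L\boxtimes(-1)^*L)\otimes P^{-1}$, and the cocycle relation $\pro_{13}^*P\otimes\pro_{23}^*P\cong(a\times 1)^*P$ (Lemmas \ref{identities} and \ref{pullback3}). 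You instead define $P_{ij}=(\pro_i+\pro_j)^*L\otimes\pro_i^*L^{-1}\otimes\pro_j^*L^{-1}$ and obtain both expansions directly from the theorem of the cube for morphisms $X\to A$; since $a^*L\otimes(L\boxtimes L)^{-1}\cong P$ by the paper's identity (i), your $P_{ij}$ coincides with theirs and the bookkeeping from that point on is literally identical (your exponent count $1+(p+1-k)=p+2-k$ for $L$ and $k$ for $(-1)^*L$ reproduces the right-hand side). What your route buys is self-containedness --- no Poincar\'e bundle, no $\phi_L$, and the induction for $b^*L$ is absorbed into the standard quadraticity statement --- while the paper's version sets up the $\cP$-notation standard in the Fourier--Mukai circle of ideas it draws on. Your closing remark is also accurate: for the application in Proposition \ref{pullback1} only the numerical class matters, so the $(-1)^*L$ corrections could be discarded, but keeping them, as both you and the paper do, yields the sharper isomorphism actually stated.
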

Let 
$$a: A\times A \longrightarrow A ~~{\rm ~~and ~~} ~~ d: A\times A \longrightarrow A$$
be the addition and subtraction map respectively, $\cP$ a normalized Poincar\'e line bundle on $A\times \widehat{A}$, and 
$\phi_L: A \rightarrow \widehat{A}$ the isogeny induced by $L$. We use the notation
$$P=(1\times \phi_L)^*\cP \qquad {\rm and} \qquad P_{ij} = \pro_{ij}^* P,$$
where $\pro_{ij} : A^{\times ({p+2})} \rightarrow A\times A$ is the projection on the $(i,j)$-factor. We will use repeatedly 
the following standard facts.

\begin{lemma}\label{identities}
The following identities hold:

\noindent
$($i$).$ $a^*L \ \cong \ (L\boxtimes L) \otimes P$. 

\noindent
$($ii$)$. $d^*L \ \cong \ (L\boxtimes (-1)^*L) \otimes P^{-1}$.

\noindent
$($iii$)$. $\pro_{13}^* P \otimes \pro_{23}^* P 
\ \cong \ (a \times 1)^* P$  on the triple product $A\times A\times A$.
\end{lemma}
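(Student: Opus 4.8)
The plan is to derive all three identities from two standard tools: the seesaw theorem, and the defining properties of the normalized Poincar\'e bundle $\cP$ on $A\times\widehat{A}$, namely that $\cP|_{A\times\{\alpha\}}$ is the degree-zero line bundle on $A$ represented by $\alpha\in\widehat{A}=\tn{Pic}^0(A)$ and that $\cP|_{\{0\}\times\widehat A}\cong\OO_{\widehat A}$. The one computation I would record at the outset is the restriction of $P=(1\times\phi_L)^*\cP$ to the slices of $A\times A$. Restricting to $A\times\{y\}$, the map $1\times\phi_L$ carries this slice to $A\times\{\phi_L(y)\}$, so $P|_{A\times\{y\}}\cong\cP|_{A\times\{\phi_L(y)\}}$ is the bundle represented by $\phi_L(y)=t_y^*L\otimes L^{-1}$; restricting instead to $\{0\}\times A$ and using the normalization of $\cP$ gives $P|_{\{0\}\times A}\cong\OO_A$. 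These two facts, together with the homomorphism property of $\phi_L$, are all that the seesaw arguments below require.

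For (i), I would set $\Lambda(L)=a^*L\otimes(L\boxtimes L)^{-1}$ and compare it with $P$ by seesaw on $A\times A$. Along a slice $A\times\{y\}$ one has $a^*L|_{A\times\{y\}}\cong t_y^*L$ and $(L\boxtimes L)|_{A\times\{y\}}\cong L$ (the second factor contributing only a constant fiber), so $\Lambda(L)|_{A\times\{y\}}\cong t_y^*L\otimes L^{-1}$, which agrees with $P|_{A\times\{y\}}$ by the previous paragraph. A direct check on $\{0\}\times A$ shows both $\Lambda(L)$ and $P$ restrict to $\OO_A$ there. The seesaw theorem then yields $\Lambda(L)\cong P$, which is exactly (i); this is Mumford's formula for $\Lambda(L)$, spelled out.

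For (ii), I would factor the subtraction map as $d=a\circ(1\times(-1))$ and pull back (i). The identity $(1\times(-1))^*(L\boxtimes L)\cong L\boxtimes(-1)^*L$ is immediate, so the content is the claim $(1\times(-1))^*P\cong P^{-1}$. This again follows from seesaw: on a slice $A\times\{y\}$ the map $1\times(-1)$ identifies it with $A\times\{-y\}$, whence $(1\times(-1))^*P|_{A\times\{y\}}\cong P|_{A\times\{-y\}}$ is represented by $\phi_L(-y)=-\phi_L(y)$ (using that $\phi_L$ is a homomorphism), which is precisely $P^{-1}|_{A\times\{y\}}$; and both sides restrict to $\OO_A$ on $\{0\}\times A$. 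Combining gives $d^*L\cong(L\boxtimes(-1)^*L)\otimes P^{-1}$.

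The main point is (iii), which is where the theorem of the cube enters. Using (i) to rewrite each factor, write $L_S=\sigma_S^*L$ for the pullback of $L$ under the partial-sum map $\sigma_S:A\times A\times A\to A$ indexed by $S\subseteq\{1,2,3\}$. Then $\pro_{13}^*P\otimes\pro_{23}^*P$ becomes $L_{13}\otimes L_{23}\otimes L_1^{-1}\otimes L_2^{-1}\otimes L_3^{-2}$, while $(a\times1)^*P$ becomes $L_{123}\otimes L_{12}^{-1}\otimes L_3^{-1}$. The asserted isomorphism is therefore equivalent to the triviality of $L_{123}\otimes L_{12}^{-1}\otimes L_{13}^{-1}\otimes L_{23}^{-1}\otimes L_1\otimes L_2\otimes L_3$, which is precisely the theorem of the cube for $L$ on $A\times A\times A$. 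Alternatively, and perhaps more conceptually, (iii) is the pullback along $1\times1\times\phi_L$ of the bi-additivity of $\cP$ in its first variable. The only real care needed throughout is bookkeeping: tracking factor orders in the slice restrictions and confirming that the partial-sum maps compose as claimed; once these are set up correctly, each identity falls out of seesaw or the cube.
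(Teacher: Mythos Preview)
Your proof is correct. Parts (i) and (ii) follow the paper's argument essentially verbatim: (i) by seesaw (the paper simply cites Mumford), and (ii) by factoring $d=a\circ(1\times(-1))$ and reducing to $(1\times(-1))^*P\cong P^{-1}$, which the paper obtains by passing through $\cP$ and invoking $(1\times(-1))^*\cP\cong\cP^{-1}$ while you verify it directly with seesaw on $P$.

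For (iii) your primary argument differs from the paper's. The paper establishes the identity $\pro_{13}^*\cP\otimes\pro_{23}^*\cP\cong(a\times1)^*\cP$ on $A\times A\times\widehat A$ by seesaw and then pulls back along $1\times1\times\phi_L$; you instead expand both sides via (i) in terms of the partial-sum pullbacks $L_S=\sigma_S^*L$ and observe that the resulting identity is precisely the theorem of the cube. Both approaches are short and standard; yours has the minor advantage of staying entirely on products of $A$ and avoiding $\widehat A$, while the paper's version yields the slightly stronger universal statement for $\cP$. You do mention the paper's route as your alternative (``bi-additivity of $\cP$ in its first variable''), so the two proofs are really just emphasizing different faces of the same biextension formalism.
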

\begin{proof}
Identity (i) is well known (see e.g. \cite{mumford} p.78) and follows from the seesaw principle.  Identity (ii) can then be deduced from (i), by noting that $d = a \circ (1, -1)$. This gives
\begin{align*} d^*L \ &\cong \ (1\times (-1))^* \bigl( (L\boxtimes L) \otimes (1\times \phi_L)^* \cP\bigr) \\ &\cong \
(L\boxtimes (-1)^*L) \otimes (1\times ((-1) \circ \phi_L))^* \cP\\  &\cong  \
(L\boxtimes (-1)^*L) \otimes (1\times  \phi_{(-1)^*L})^* (1,-1)^* \cP\\ &\cong \ (L\boxtimes (-1)^*L) \otimes (1\times \phi_L)^*\cP^{-1},\end{align*}
where the last isomorphism follows from the well-known identity
$$((-1) \times 1)^* \cP\cong (1\times (-1))^* \cP \cong \cP^{-1}.$$
Identity (iii) follows from the formula
$$\pro_{13}^* \cP \otimes \pro_{23}^* \cP \cong (a,1)^* \cP$$
on $A\times A \times \widehat{A}$, which in turn is easily verified using the seesaw principle
(see e.g. the proof of Mukai's inversion theorem \cite[Theorem 2.2]{mukai}).
\end{proof}

Proposition \ref{pullback2} follows by putting together the formulas in the next Lemma.

\begin{lemma}\label{pullback3}
If $L$ is an ample line bundle on $A$, the following identities hold:

\noindent
$($i$)$. $b^* L \  \cong  \ \Big( ~\overset{p+2}{\opbox} L \Big )\otimes \Big( \underset{i<j}{\otimes} P_{ij} \Big)$.

\noindent
$($ii$)$.  $d_{ij}^*L \ \cong \ \Big( \OO_A \boxtimes \ldots\boxtimes \underset{i}{L} \boxtimes \ldots \boxtimes \underset{j}{(-1)^* L}\boxtimes \ldots \boxtimes \OO_A \Big ) \otimes P_{ij}^{-1}$, for all $i < j$.

\noindent
$($iii$)$. $\delta^* \Big( ~\overset{p+1}{\opbox} L \Big )  \ \cong  \ \Big( L^{p+1} \boxtimes (-1)^*L\boxtimes \ldots \boxtimes (-1)^*L \Big) 
\otimes P_{01}^{-1} \otimes \ldots \otimes P_{0,p+1}^{-1}$.
\end{lemma}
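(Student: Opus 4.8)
The three isomorphisms are all formal consequences of Lemma \ref{identities}: one writes $b$, $d_{ij}$ and $\delta$ as compositions of the addition and subtraction maps on $A\times A$ with coordinate projections, and then keeps track of the Poincar\'e twists. The plan is to prove (ii) first, since it is essentially immediate, then to deduce (iii) from it, and finally to obtain (i) by an induction on the number of factors; all the real substance will be concentrated in a single Poincar\'e-bundle identity needed for (i).

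For (ii), note that $d_{ij}=d\circ\pro_{ij}$, where $d\colon A\times A\to A$ is the subtraction map and $\pro_{ij}\colon A^{\times(p+2)}\to A\times A$ is the projection onto the $(i,j)$-factor. Pulling the isomorphism of Lemma \ref{identities}(ii) back along $\pro_{ij}$ and using that $\pro_{ij}^*\big(L\boxtimes(-1)^*L\big)\cong\OO_A\boxtimes\cdots\boxtimes\underset{i}{L}\boxtimes\cdots\boxtimes\underset{j}{(-1)^*L}\boxtimes\cdots\boxtimes\OO_A$ while $\pro_{ij}^*P^{-1}=P_{ij}^{-1}$ gives the assertion.

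For (iii), observe that the $k$-th component of $\delta$ (for $1\le k\le p+1$) is exactly the subtraction map $d_{0k}\colon(x_0,\dots,x_{p+1})\mapsto x_0-x_k$, so $\delta^*\big(\opbox^{p+1} L\big)\cong\bigotimes_{k=1}^{p+1}d_{0k}^*L$. Substituting the formula from (ii) with $i=0$ and $j=k$ and collecting the factors slot by slot, the $0$-th slot acquires one copy of $L$ for each $k$, hence $L^{p+1}$; each slot $k\ge 1$ acquires a single copy of $(-1)^*L$; and the Poincar\'e contributions multiply to $P_{01}^{-1}\otimes\cdots\otimes P_{0,p+1}^{-1}$. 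This is precisely (iii).

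For (i), I would prove by induction on $n$ the more general statement that the $n$-fold addition map $b_n\colon A^{\times n}\to A$ satisfies $b_n^*L\cong\big(\opbox^{n} L\big)\otimes\bigotimes_{i<j}P_{ij}$, the product running over all pairs of factors and $P_{ij}$ denoting the pullback of $P$ to the $(i,j)$-factor; taking $n=p+2$ yields (i). The base case $n=2$ is Lemma \ref{identities}(i). For the inductive step, write $b_n=a\circ(b_{n-1}\times 1_A)$ and pull back Lemma \ref{identities}(i) to get $b_n^*L\cong\big(b_{n-1}^*L\boxtimes L\big)\otimes(b_{n-1}\times 1_A)^*P$; the inductive hypothesis describes $b_{n-1}^*L$, so the one remaining point — and the only genuinely nontrivial one — is the identity $(b_{n-1}\times 1_A)^*P\cong\bigotimes_{i=1}^{n-1}P_{i,n}$ on $A^{\times n}$. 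This is the main obstacle, and it is where the theorem of the cube enters, through the seesaw/biextension identity Lemma \ref{identities}(iii); I would handle it by a secondary induction, starting from the trivial case $b_1=\mathrm{id}$. Factoring $b_{n-1}=a\circ(b_{n-2}\times 1_A)$ exhibits $b_{n-1}\times 1_A$ as a composition $A^{\times n}\to A\times A\times A\to A\times A$ whose second arrow is $a\times 1_A$; applying Lemma \ref{identities}(iii) to that arrow peels off the factor $P_{n-1,n}$ and reduces to $(b_{n-2}\times 1_A)^*P$ in one fewer variable. Reassembling the pairwise twists accumulated along the way gives $\bigotimes_{i<j\le n-1}P_{ij}\otimes\bigotimes_{i\le n-1}P_{i,n}=\bigotimes_{i<j\le n}P_{ij}$, which finishes the induction, and with it the proof of (i) and of the Lemma.
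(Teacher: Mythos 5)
Your proof is correct and follows essentially the same route as the paper: parts (ii) and (iii) are verbatim the paper's argument, and part (i) is the same induction on the number of factors driven by Lemma \ref{identities}(i) and (iii). The only (harmless) difference is organizational: the paper decomposes the addition map as $b_{p+2}=b_{p+1}\circ(a\times\mathrm{id})$, merging the first two factors and then distributing the resulting Poincar\'e twists, whereas you peel off the last factor via $b_n=a\circ(b_{n-1}\times 1_A)$ and isolate the identity $(b_{n-1}\times 1_A)^*P\cong\bigotimes_{i<n}P_{i,n}$ as a secondary induction --- a step the paper leaves implicit but which your argument correctly supplies.
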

\begin{proof}
(i) If $p = 0$ this is Lemma \ref{identities} (i). We can inductively obtain the formula for some $p >0$ from that for $p-1$ 
by noting that $b ~(= b_{p+2}) = (a, {\rm id}) \circ b_{p+1}$, where $b_k$ denotes the addition map for $k$ factors, $a$ is addition 
map on the first two factors, and ${\rm id}$ is the identity on the last $p$ factors.  Therefore inductively we have 
$$b^*L \cong (a, {\rm id})^* \Big( \big( ~\overset{p+1}{\opbox} L\big) \otimes \big( \underset{i<j}{\otimes} P_{ij} \big)\Big).$$
The formula follows then by using Lemma \ref{identities} (i) for the addition map $a$ on the first two factors, and Lemma \ref{identities}
(iii) for the combination of the first two factors with any of the other $p$ factors.

\noindent
(ii) This follows simply by noting that $d_{ij} = d \circ p_{ij}$, where $p_{ij}$ is the projection on the $(i,j)$ factors and $d$ is the difference 
map. We then apply Lemma \ref{identities} (ii).

\noindent
(iii) Note that $\delta = (d_{01}, \ldots, d_{0, p+1})$. Therefore
$$\delta^* \Big( ~\overset{p+1}{\opbox} L \Big )  \cong  d_{01}^* L \otimes \ldots \otimes d_{0,p+1}^* L.$$
One then applies the formula in (ii).
\end{proof}

In order to discuss the Koszul property in the next section, we will need a  variant of these results. Specifically, fix $k \ge 2$ and consider the mapping
$$\gamma: A^{\times k} \lra A^{\times (k-1)} \ \ , \ \ (x_0, x_1, \ldots, x_{k}) 
\mapsto (x_0 - x_1, x_1 - x_2, \ldots, x_{k-1} - x_{k}).$$ 
Consider also for any $0 \le i < j \le k$ the maps
 $$a_{ij}: A^{\times k }\rightarrow A, \qquad  (x_0,x_1, \ldots, x_{k})\mapsto x_i + x_j.$$

\begin{variant}\label{pullback4}
For any ample line bundle $L$ on $A$ we have
\small
$$ \gamma^* \Big( \opbox^{k} L \Big ) \, \otimes \, \Big( \underset{0\le i \le  k-1}{\otimes} a_{i,i+1}^*L \Big) 
\ = \  L^2 \opbox \Big(L^2\otimes (-1)^*L \Big) \opbox\ldots \opbox 
\Big(L^2\otimes (-1)^*L \Big) \opbox \Big(L \otimes (-1)^*L \Big) \, .$$ 
\end{variant}
\normalsize
\begin{proof}
Noting that $a_{ij} = a \circ \pro_{ij}$, where $\pro_{ij}$ is the projection on the $(i,j)$ factors and $a$ is the difference 
map, and using Lemma \ref{identities} (i), we have 
$$a_{i j}^* L \cong \ \Big( \OO_A \boxtimes \ldots\boxtimes \underset{i}{L} \boxtimes \ldots \boxtimes \underset{j}{L}\boxtimes \ldots \boxtimes \OO_A \Big ) \otimes P_{ij} \, .$$ 
On the other hand,  $\gamma = (d_{01}, d_{12}, \ldots, d_{k-1,k})$ and using Lemma \ref{pullback3} (ii) for each of the 
factors, we have
$$\gamma^* \Big( \opbox^{k-1} L \Big ) \cong \Big( L\boxtimes \big(L\otimes (-1)^*L\big) \boxtimes  \ldots \boxtimes 
\big(L\otimes (-1)^*L \big) \boxtimes (-1)^*L \Big) \otimes P_{01}^{-1} \otimes \ldots \otimes P_{k-1,k}^{-1} \, .$$
\end{proof}

\begin{corollary} \label{Kosz.Pos.Corollary}
 There is a nef line bundle $N$ on $A^{\times k}$ with the property that
\begin{equation}
 \gamma^* \Big( \opbox^{k-1} L \Big ) \,  \otimes N \ = \ \opbox^{k} \, L^{3} \, \qed
\end{equation}  
\end{corollary}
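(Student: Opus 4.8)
The plan is to obtain this from Variant~\ref{pullback4} in exactly the manner in which Proposition~\ref{pullback1} was obtained from Proposition~\ref{pullback2} at the end of \S1. First I would simply \emph{define} $N$ to be the line bundle the asserted identity forces on us,
\[
N \ := \ \Big( \opbox^{k} L^3 \Big) \otimes \Big( \gamma^{*} \opbox^{k-1} L \Big)^{-1},
\]
so that $\gamma^{*}\big(\opbox^{k-1}L\big)\otimes N = \opbox^{k}L^{3}$ holds tautologically, and the entire burden of the proof is to check that $N$ is nef.

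For that I would feed in Variant~\ref{pullback4}. Writing $\mathcal{R}$ for the exterior product of twists of $L$ on its right-hand side, the Variant reads $\gamma^{*}\big(\opbox^{k-1}L\big)\otimes\big(\bigotimes_{i} a_{i,i+1}^{*}L\big) = \mathcal{R}$, and substituting this into the definition of $N$ gives
\[
N \ = \ \Big(\opbox^{k}L^{3}\Big)\otimes\mathcal{R}^{-1}\otimes\Big(\bigotimes_{i}a_{i,i+1}^{*}L\Big).
\]
Now $\bigotimes_{i}a_{i,i+1}^{*}L$ is nef, being a tensor product of pullbacks of the ample bundle $L$ along the morphisms $a_{i,i+1}\colon A^{\times k}\to A$. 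For the remaining factor $\big(\opbox^{k}L^{3}\big)\otimes\mathcal{R}^{-1}$ I would use, just as in the footnote to \S1, that $L$ and $(-1)^{*}L$ differ by a topologically — hence numerically — trivial line bundle; pulling those discrepancies out of $\mathcal{R}$ turns $\big(\opbox^{k}L^{3}\big)\otimes\mathcal{R}^{-1}$ into an honest exterior product of powers of $L$, twisted by a numerically trivial bundle. Since numerically trivial bundles are nef, this factor is nef as soon as every exponent that appears is nonnegative, and then $N$ is a tensor product of nef bundles and hence nef, which is all we want.

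The step I expect to be the real obstacle is exactly this last point: checking that, after the cancellations above, no tensor factor of $\big(\opbox^{k}L^{3}\big)\otimes\mathcal{R}^{-1}$ acquires a negative power of $L$ — equivalently, that the Hermitian class attached to $c_{1}(N)$ on $\bigoplus_{j}T_{0}A$ is positive semidefinite. This is the precise analogue of the short numerical computation that closes \S1, except that here the combinatorics is governed by the path graph on the $k$ factors of $A^{\times k}$ (with consecutive differences) rather than by the star graph, so it is at this point that one must verify that the numbers come out right. Apart from this, the argument uses nothing beyond Variant~\ref{pullback4}, the nefness of pullbacks of ample line bundles, and the numerical triviality of $(-1)^{*}L\otimes L^{-1}$.
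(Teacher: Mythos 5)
Your proposal is correct and is essentially the paper's own (unwritten) argument: the corollary is meant to follow immediately from Variant~\ref{pullback4} by absorbing $\bigotimes_i a_{i,i+1}^*L$ (nef, as pullbacks of an ample bundle) and the discrepancy $\big(\opbox^k L^3\big)\otimes\mathcal{R}^{-1}$ into $N$. The exponent check you flag as the remaining obstacle does come out right: factor by factor the discrepancy is $L^3\otimes L^{-2}=L$ in position $0$, $L^3\otimes L^{-2}\otimes((-1)^*L)^{-1}\equiv\OO$ (numerically trivial) in the middle positions, and $L^3\otimes L^{-1}\otimes((-1)^*L)^{-1}\equiv L$ in the last, so every power is nonnegative and $N$ is nef.
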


\section{Complements}

This section contains a couple of additional results that are established along the same lines as those above. As before $A$ is an abelian variety of dimension $g$, and $L$ is an ample line bundle on $A$.

We start with a criterion for $L$ to define an embedding in which $A$ satisfies the Koszul property.

\begin{proposition}
Assume that $\eps(A,L) > 3g$. Then under the embedding defined by $L$, the homogeneous coordinate ring of $A$ is a Koszul algebra.
\end{proposition}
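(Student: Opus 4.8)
The strategy is to establish the Koszul property via the standard cohomological criterion (due to Lazarsfeld--Pareschi--Popa--style arguments going back to work on Koszul algebras of projective varieties, cf. \cite{PP2}): if $L$ is normally generated, then the coordinate ring of $A$ under the embedding by $L$ is Koszul provided that certain vanishings hold on self-products of $A$. Concretely, following the pattern of Green's criterion used above for $(N_p)$, the Koszul property will follow once we verify that
\[
\HHH{i}{A^{\times k}}{ \opbox^{k} L \otimes Q \otimes \II_{\Gamma_k} } \ = \ 0
\]
for all $k \ge 2$, all $i > 0$, and all nef line bundles $Q$ on $A^{\times k}$, where $\Gamma_k \subseteq A^{\times k}$ is the union of the "consecutive" diagonals $\{x_{j-1} = x_j\}$ for $1 \le j \le k$ --- equivalently, $\Gamma_k = \gamma^{-1}(\Lambda_k)$ where $\gamma$ is the iterated-difference map from \S2 and $\Lambda_k \subseteq A^{\times(k-1)}$ is the union of the coordinate hyperplanes $\{y_j = 0\}$. (The precise reduction of the Koszul property of the section ring to this family of vanishings is exactly analogous to the reduction of $(N_p)$ to Proposition \ref{Basic.Vanishing}, and I would cite \cite{PP2} or the relevant source for it.)

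**Construction of the divisor.** I would then mimic \S1 verbatim. Since $\eps(A,L) > 3g$, Lemma \ref{construct.single.divisor} (with "$p+2$" replaced by "$3$") produces an effective $\QQ$-divisor $F_0 \num \big(\tfrac{1-c}{3}\big)L$ on $A$ with $\MI{A,F_0} = \II_0$. Form $E_0 = \sum_{j=1}^{k-1}\pro_j^* F_0$ on $A^{\times(k-1)}$, so that $\MI{A^{\times(k-1)},E_0} = \II_{\Lambda_k}$ by \cite[9.5.22]{PAG}, and set $E = \gamma^* E_0$ on $A^{\times k}$. Because multiplier ideals pull back under the smooth morphism $\gamma$ (\cite[9.5.45]{PAG}), we get $\MI{A^{\times k},E} = \II_{\Gamma_k}$. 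For positivity, Corollary \ref{Kosz.Pos.Corollary} gives a nef line bundle $N$ on $A^{\times k}$ with $\gamma^*\big(\opbox^{k-1}L\big)\otimes N = \opbox^k L^3$; since $E \num \big(\tfrac{1-c}{3}\big)\gamma^*\big(\opbox^{k-1}L\big)$, we find
\[
\Big(\opbox^{k} L\Big)(-E) \ \num \ c\cdot\Big(\opbox^{k}L\Big) \,+\, \Big(\tfrac{1-c}{3}\Big)\cdot N,
\]
which is ample. Twisting by the nef $Q$ preserves ampleness, so Nadel vanishing yields the required cohomology vanishing, and the Koszul property follows.

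**The main obstacle.** The geometric inputs are all in place --- the iterated-difference map $\gamma$, the building-set structure of the consecutive diagonals, and Corollary \ref{Kosz.Pos.Corollary} were evidently set up in \S2 precisely for this purpose --- so the substantive point is the cohomological criterion for Koszulness itself: one needs that normal generation of $L$ together with the vanishing of $H^i$ of $\opbox^k L \otimes \II_{\Gamma_k}$ (twisted by arbitrary nef bundles, to accommodate the extra $L$-factors that appear in the inductive resolution à la the footnote to Proposition \ref{Basic.Vanishing}) implies the linear strand of the minimal resolution of the coordinate ring over itself is as required. This is where I would lean on the literature (the bar-resolution / iterated-mapping-cone argument, e.g. in \cite{PP2}), rather than reprove it; the abelian-variety-specific content is entirely contained in the multiplier-ideal and positivity computations above, which go through with only the cosmetic change $p+2 \rightsquigarrow 3$ and $\delta \rightsquigarrow \gamma$.
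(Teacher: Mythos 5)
Your proposal follows the paper's own argument essentially verbatim: the same cohomological criterion for Koszulness (the paper cites \cite{IM} for it), the same realization of the ideal of the consecutive diagonals as the multiplier ideal of $\gamma^*$ of an exterior sum of divisors $F_0 \num \big(\tfrac{1-c}{3}\big)L$, and the same appeal to Corollary \ref{Kosz.Pos.Corollary} for the positivity needed in Nadel vanishing. The argument is correct and matches the paper's proof.
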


\begin{proof} [Sketch of Proof]
Fix $k \ge 2$, and consider the $k$-fold self product $A^{\times k}$ of $A$.  By analogy to Green's criterion, it is known that the Koszul property is implied by the vanishings (for all $k \ge 2$) 
\begin{equation}
\HHH{1}{A^{\times k}}{\opbox^k L \otimes Q \otimes \II_{\Gamma}} \ = \ 0
\end{equation}
where $Q$ is a nef bundle on $A^{\times k}$, and $\Gamma$ is the reduced algebraic set:
\[  \Gamma   =    \Delta_{1,2}    \cup \, \Delta_{2,3} \, \cup \, \ldots \, \cup \, \Delta_{k-1,k}  \]
(see \cite[Proposition 1.9]{IM}). As above, this is established by realizing $\Gamma$ as a  multiplier ideal and applying Nadel vanishing. For the first point, one constructs (as in the case $p = 2$ of Theorem \ref{Main.Thm})  a divisor $F_0 \num (\tfrac{1-c}{3}) L$ on $A$, takes its exterior sum on $A^{\times (k-1)}$, and then  pulls back under the map $\gamma: A^{\times k} \lra A^{\times (k-1)}$ appearing at the end of the last section. The required positivity follows from Corollary \ref{Kosz.Pos.Corollary}. 
\end{proof}

Finally, we record an analogue of the result of Hwang and To for Wahl maps (see \cite{wahl}).
\begin{proposition}
Let $L$ be an ample line bundle on $A$, and assume that $\eps(A, L) > 2(g+m)$ for some integer $m \ge 0$. Then
\[
\HHH{1}{A \times A}{ L \boxtimes L \otimes \II_{\Delta}^{m+1}} \ = \ 0.
\] 
In particular, the m-th Wahl (or Gaussian) map
\[  \gamma^m_L:\HHH{0}{A\times A} {L\boxtimes L\otimes\II_{\Delta}^m}\lra \HHH{0}{A\times A} {L\boxtimes L\otimes\II_{\Delta}^m\otimes\OO_\Delta}\cong \HHH{0}{A} {L^2\otimes S^m\Omega^1_A}\]
is surjective.
 \end{proposition}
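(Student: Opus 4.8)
The plan is to mimic the proof of the Hwang–To case (the case $p=0$ of Theorem \ref{Main.Thm}) sketched in the Introduction, but now replacing the ideal sheaf $\II_\Delta$ by its power $\II_\Delta^{m+1}$. The point is that the higher-multiplicity analogue of the divisor $E_0$ from Lemma \ref{construct.single.divisor} should produce, via pullback under the difference map $\delta: A\times A\to A$, $(x,y)\mapsto x-y$, a divisor on $A\times A$ whose multiplier ideal is exactly $\II_\Delta^{m+1}$, while the Seshadri hypothesis $\eps(A,L)>2(g+m)$ will supply the extra positivity needed to absorb the higher multiplicity and still leave an ample twist for Nadel vanishing.

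First I would establish the local statement on $A$: assuming $\eps(A,L) > 2(g+m)$, there exists an effective $\QQ$-divisor $F_0 \num \bigl(\tfrac{1-c}{2}\bigr) L$ on $A$, for some $0<c\ll 1$, with $\MI{A, F_0} = \II_0^{m+1}$. This is the $m$-fold-thickened version of Lemma \ref{construct.single.divisor}: one works on the blowup $\rho: A' = \Bl_0(A)\to A$ with exceptional divisor $T$, notes that $(1-c)\rho^*L - (g+m)T$ is ample for $0<c\ll 1$ by the definition of the Seshadri constant, takes $D'$ a general divisor in a large multiple of this class, and checks (Bertini on $A'$, together with the standard computation of multiplier ideals of divisors with an ordinary singularity, cf. \cite[Ch.~9]{PAG}) that $D=\rho_*(D')$ has multiplicity $(g+m)k$ at $0$ with smooth tangent cone and is smooth elsewhere, so that $F_0 = \tfrac{1}{2k}D$ works — the multiplier ideal of a $\QQ$-divisor that is $\tfrac{1}{2k}$ times a divisor with an ordinary point of multiplicity $(g+m)k$ is $\II_0^{m+1}$ precisely because the log canonical threshold computation gives $\lfloor \tfrac{(g+m)k}{2k}\cdot\text{(something)}\rfloor$ — here one should instead take $F_0 = \tfrac{1}{k}D$ with $\mult_0 D = (g+m)k$ to land exactly on $\II_0^{m+1}$, and then rescale; I would check the precise numerology carefully since the exponent must come out to be exactly $m+1$.

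Next, set $E = \delta^*(F_0)$ on $A\times A$. Since $\delta$ is smooth (it is a composition of an isomorphism with a projection), forming multiplier ideals commutes with $\delta^*$ \cite[9.5.45]{PAG}, and $\delta^{-1}(0) = \Delta$ scheme-theoretically, so $\MI{A\times A, E} = \delta^*\II_0^{m+1} = \II_\Delta^{m+1}$. For the positivity, I would invoke the identity (**) from the Introduction — equivalently Lemma \ref{identities}(ii) — namely $L^2\boxtimes L^2 = \delta^*(L)\otimes N$ for a nef line bundle $N$ on $A\times A$ (explicitly $d^*L \cong (L\boxtimes(-1)^*L)\otimes P^{-1}$, and $P^{-1}$ together with the topologically trivial twist relating $L$ and $(-1)^*L$ gets absorbed). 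Since $E \num \bigl(\tfrac{1-c}{2}\bigr)\delta^*L$, we get $(L\boxtimes L)(-E) \num c\cdot(L\boxtimes L) + \bigl(\tfrac{1-c}{2}\bigr)N$, which is ample. Then Nadel vanishing applied to $(L\boxtimes L)\otimes\MI{A\times A, E} = L\boxtimes L\otimes\II_\Delta^{m+1}$ — whose twisting line bundle $(L\boxtimes L)(-E)$ is ample — yields $\HHH{1}{A\times A}{L\boxtimes L\otimes\II_\Delta^{m+1}} = 0$, i.e. $\HHH{i}{A\times A}{L\boxtimes L\otimes\II_\Delta^{m+1}}=0$ for all $i>0$, which is more than needed.

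Finally, the surjectivity of the Wahl map follows formally: the short exact sequence $0\to\II_\Delta^{m+1}\to\II_\Delta^m\to\II_\Delta^m/\II_\Delta^{m+1}\to 0$, tensored with $L\boxtimes L$, gives a long exact sequence in cohomology; the connecting map $\HHH{0}{A\times A}{L\boxtimes L\otimes\II_\Delta^m}\to\HHH{0}{A\times A}{L\boxtimes L\otimes\II_\Delta^m\otimes\OO_\Delta}$ is $\gamma^m_L$, and its cokernel injects into $\HHH{1}{A\times A}{L\boxtimes L\otimes\II_\Delta^{m+1}}$, which we have just shown vanishes; the identification $\HHH{0}{A\times A}{L\boxtimes L\otimes\II_\Delta^m\otimes\OO_\Delta}\cong\HHH{0}{A}{L^2\otimes S^m\Omega^1_A}$ is standard (the conormal bundle of the diagonal in $A\times A$ is $\Omega^1_A$, and $\II_\Delta^m/\II_\Delta^{m+1}\cong S^m N^\vee_{\Delta/A\times A}$, while $L\boxtimes L$ restricted to $\Delta\cong A$ is $L^2$). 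The main obstacle I anticipate is pinning down the exact exponent in the multiplier ideal computation for $F_0$ — i.e. arranging the coefficients so that $\MI{A,F_0}$ is precisely $\II_0^{m+1}$ and not $\II_0^m$ or $\II_0^{m+2}$ — which is exactly where the numerical threshold $2(g+m)$ (rather than $2g$) enters, and where care with the integer part in the definition of the multiplier ideal of a divisor with an ordinary singular point is essential.
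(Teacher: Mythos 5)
Your proposal follows exactly the paper's route: the authors' own proof of this proposition is precisely the two\babelhyphen{nobreak}line sketch you reconstruct, namely take $E_0 \num \left(\tfrac{1-c}{2}\right)L$ with $\MI{A,E_0}=\II_0^{m+1}$, pull back under the difference map, and rerun the Introduction's Nadel-vanishing argument, with the Wahl-map surjectivity then following from the long exact sequence as you say. (This sentence is only to settle the numerology you flagged: since $\eps(A,L)>2(g+m)$ the class $(1-c)\rho^*L-2(g+m)T$ is ample on $\Bl_0(A)$ for $0<c\ll1$, a general $D'$ in $k$ times this class pushes down to $D$ with $\mult_0 D=2(g+m)k$, smooth tangent cone and no other singularities, and $F_0=\tfrac{1}{2k}D\num\left(\tfrac{1-c}{2}\right)L$ has $\mult_0 F_0=g+m$, so $\MI{A,F_0}=\rho_*\OO_{A'}\big(K_{A'/A}-\lfloor\rho^*F_0\rfloor\big)=\rho_*\OO_{A'}(-(m+1)T)=\II_0^{m+1}$; your alternative normalization $F_0=\tfrac{1}{k}D$ would give the right multiplier ideal but the wrong numerical class $(1-c)L$, ruining the ampleness of $(L\boxtimes L)(-E)$.)
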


\begin{proof}[Sketch of Proof]  One proceeds just as in the proof outlined in the Introduction, except that the stronger numerical hypothesis on $\eps(A,L)$ allows one to take 
$ E_0   \num   \left(\tfrac{ 1-c}{2}\right) L $
with $\MI{A, E_0} \ = \ \II_{0}^{m+1}$. For the rest one argues as before.
\end{proof}

\begin{remark} 
The previous proposition, combined with Bauer's result mentioned in the Introduction, and with Theorem B of \cite{cfp}, implies the surjectivity of the first Wahl map of curves of genus $g$ sitting on very general abelian surfaces for all $g>145$. This provides a ``non-degenerational" proof -- in the range $g>145$ -- of the surjectivity of the map $\gamma^1_{K_C}$   for general curves of genus $g$, which holds   for all $g\ge 12$ and $g=10$  (\cite{chm}) .
\end{remark}

\end{document}